\def\ps@pprintTitle{%
	\let\@oddhead\@empty
	\let\@evenhead\@empty
	\def\@oddfoot{}%
	\let\@evenfoot\@oddfoot}
\renewcommand{\thesubtable}{\Roman{tableno}}
\newcommand{\subtablelabel}[1]{\stepcounter{tableno}\caption{\textbf{#1}}\setcounter{rownumbers}{0}\setcounter{rowsubnumbers}{0}\def\@currentlabel{\thesubtable}}
\newcommand{\tablelabel}{\def\@currentlabel{Table 1}\label{table}}
\newcounter{rownumbers}
\newcounter{rowsubnumbers}
\newcounter{tableno}
\newcommand\rownumber{\stepcounter{rownumbers}\arabic{rownumbers}\setcounter{rowsubnumbers}{0}\def\@currentlabel{\thesubtable.\arabic{rownumbers}}}
\newcommand\rowsubnumber{\ifthenelse{\value{rowsubnumbers}=0}{\stepcounter{rownumbers}}{}\stepcounter{rowsubnumbers}\arabic{rownumbers}(\roman{rowsubnumbers})\def\@currentlabel{\thesubtable.\arabic{rownumbers}(\roman{rowsubnumbers})}}
\newcommand\Sin{\mathrm{Sin}^{-1}}
\newcommand\Cos{\mathrm{Cos}^{-1}}
\newcommand\Tan{\mathrm{Tan}^{-1}}
\newcommand\erf{\mathrm{erf}}
\newcommand\erfc{\mathrm{erfc}}
\newcommand\nl{\\&&&\\\hline&&&\\\rownumber&}
\newcommand{\nlwl}[1]{\\&&&\\\hline&&&\\
	\rownumber\label{#1} & }
\newcommand\hnl{\\&&&\\\cline{3-4}&&&\\&&}
\newcommand\enl{\\&&&\\\hline}
\newcommand\stl{\hline&&&\\}
\newcommand\firstrow{
	\stl
	& \mb{y = f(x) = \L\{g(m)\}(x)} & \mb{d = g(m) = \L\{f(x)\}(m)} & \textbf{Notes}\nl}
\newcommand\firstrowwl[1]{\stl & 
	\mb{y = f(x) = \L\{g(m)\}(x)} & \mb{d = g(m) = \L\{f(x)\}(m)} & \textbf{Notes}\nlwl{#1}}
\newcolumntype{L}[1]{>{\centering$\displaystyle}p{#1}<{$}}
\newcolumntype{P}[1]{>{}p{#1}<{}}
\renewcommand{\L}[0]{\mathcal{L}}
\newcommand{\sign}{\mathrm{sign}}
\newcommand{\logit}{\mathrm{logit}}
\newcommand{\mb}[1]{\text{\boldmath{$#1$}}}
\newcommand\bbR[0]{\mathbb{R}}
\renewcommand\epsilon{\upvarepsilon}
\renewcommand{\mathbf}[1]{\ensuremath{\boldsymbol{#1}}}
\newtheorem{theorem}{Theorem}
\newdefinition{example}{Example}
\newdefinition{method}{Method}
\newproof{proof}{\textbf{Proof}}
\title{A table of Legendre-transformation pairs with methodologies for construction, authentication, and approximation of pairs}
\author[1]{Quinn T. Kolt}
\author[2]{Steven J. Kilner}
\author[1]{David L. Farnsworth}
\date{}
\affiliation[1]{organization={School of Mathematical Sciences, Rochester Institute of Technology}, 
				city={Rochester}, 
				state={New York}, 
				postcode={14623}, 
				country={USA}}
\affiliation[2]{organization={Department of Mathematics, Monroe Community College}, 
				city={Rochester}, 
				state={New York}, 
				postcode={14623}, 
				country={USA}}
\begin{document}
	\setlength{\abovedisplayskip}{5pt}
	\setlength{\belowdisplayskip}{5pt}
	\setlength{\abovedisplayshortskip}{5pt}
	\setlength{\belowdisplayshortskip}{5pt}
	\begin{abstract}
		An extensive table of pairs of functions linked by the Legendre transformation is presented. Many special functions and formulas that are used in the sciences are included in the pairs. Formulations are provided for finding the Legendre transformations of analytic functions and of polynomial approximations to other functions.
	\end{abstract}
	\begin{keyword}
		convex function, dual function, dual space, Legendre transformation, involutive property, transformation pairs table\vspace{6pt}\\
		\textit{AMS subject classification}: 44A15, 46B10, 44A20
	\end{keyword}
	
		\maketitle

	\section{Introduction}
		The \textit{Legendre transformation} of a curve in two dimensions is a curve in another space. The second curve is created by the coefficients of the original curve’s tangent and supporting lines. For the curve $y=f(x)$, the lines are written as
			$$y = mx - g(m)$$
		Properly interpreted, the curve $y = f(x)$ in $x,y$-space has the same information as the curve $d = g(m)$ in $m,d$-space. The two spaces are said to be \textit{dual} to each other, and the curves
		$$y = f(x)\text{ and }d = g(m)$$
		are \textit{dual curves}, which form a unique pair. The transformation between the two curves is called the \textit{Legendre transformation} and is written
		$$\L\{f(x)\}(m) = g(m).$$
		
		The Legendre transformation, which relates curves in a pair of two-dimensional spaces, was introduced by Adrien-Marie Legendre (1752–1833). It is widely employed in classical and statistical mechanics and thermodynamics \cite{Arnold1978,doi:10.1119/1.3119512}, and in econometrics \cite{Blackorby1978,RePEc:eee:jetheo:v:1:y:1969:i:4:p:374-396}. In mathematics, it arises in Young’s inequality \cite{hardy_1967,Dragoslov1970}, in the Clairaut differential equation \cite{Ince1944,Sternberg1954}, and in polar
		reciprocals of geometry \cite{Bix2006,Glaeser2016}.
		
		A potential limitation on the use of this transformation may be the lack of a ready source of pairs of functions that are related by the transformation. In order to remedy that, we present an extensive table of such pairs. Because the transformation is involutive or reflexive, the table is even more expansive than it might initially appear, that is, it can be read right-to-left and left-to-right. For curves that are not amenable to inclusion in the table, parametric representations and approximating curves are available and discussed here. These suggest that the Legendre transformation could have even more widespread applicability.
		
	\section{Finding the dual curve}
	Given a differentiable function $y = f(x)$, the dual curve can be expressed with $x$ as the parameter. The equation of the tangent line at $x = a$ is
	$$y = f'(a)x - (af'(a) - f(a)).$$
	Replacing the parameter $a$ with $x$ gives the parametric form of the dual curve in $m,d$-space in terms of the parameter $x$:
	\begin{align}
		m &= f'(x), \label{mdef}\\
		d &= xf'(x) - f(x).\label{ddef}
	\end{align}
	
	Substituting numerical values for $x$ into \eqref{mdef} and \eqref{ddef} gives corresponding points of the dual curve.
	If \eqref{mdef} can be solved for $x$, so that
	\begin{equation}
		x = (f')^{-1}(m),\label{solveforx}
	\end{equation}
	then \eqref{solveforx} can be substituted into \eqref{ddef} yielding an explicit form $d = g(m)$ for the dual curve, which is Method 1 in Section 3. 
	
	\begin{example}[an example $y = f(x)$ for which \eqref{mdef} cannot be analytically solved for $x$]
		Consider the composite function $y = f(x) = \sin(x^{2})$ for $x\in\bbR$. Then, $y' = f'(x) = m = 2x\cos(x^{2})$ for $m\in\bbR$ and $d = mx - y = 2x^{2}\cos(x^{2}) - \sin(x^{2})$. Thus, a parametric representation of the dual curve is
		\begin{align*}
			m &= 2x\cos(x^{2})\\
			d &= 2x^{2}\cos(x^{2}) - \sin(x^{2}).
		\end{align*}
	\end{example}
	
	Opting for a parametric portrayal can be very useful for sums of functions, as well.
	
	\begin{example}[sum of functions]
		Consider $y = f(x) = \sin(x^{2}) - x^{3} + e^{x}$ for $x\in\bbR$, then $y'=f'(x) = m = 2x\cos(x^{2}) - 3x^{2} + e^{x}$ for $m\in\bbR$ and $d=mx-y = 2x^{2}\cos(x^{2}) - \sin(x^{2}) - 2x^{3} + (x-1)e^{x}$.	
	\end{example}

	In certain situations, which are described in Methods 1–4 in Section 3, explicit formulations of the dual curve are available and yield the pairs of functions, such as those in \ref{table}, which is in \ref{app}.

	\section{Methodologies for finding and verifying the Legendre-transformation pairs in \ref{table}}
	The following theorem contains three facts that are helpful. Theorem \ref{thmleg}\ref{thmslope} says that $x$ plays the role of slope in the dual space. Theorem \ref{thmleg}\ref{thmmaint} says that convexity is maintained by the transformation \cite{doi:10.1119/1.3119512,Rockafellar70convexanalysis}. Theorem \ref{thmleg}\ref{thmrefl} says that we can read \ref{table} in \ref{app} left-to-right and right-to-left \cite{Rockafellar70convexanalysis,Lay2007}.
	
	\begin{theorem}\label{thmleg}
		If $f(x)$ is a differentiable function and $\L\{f(x)\}(m) = g(m)$, then
		\begin{enumerate}[label=\textup{(\alph*)}]
			\item\label{thmslope} $x$ is slope in the dual curve: the independent variables $x$ and $m$ are slopes of the corresponding dual curves. In particular where both curves possess first derivatives and nonzero second derivatives,
			$$m = f'(x)\text{ and }x = g'(m).$$
			\item \label{thmmaint} Maintenance of strict convexity and strict concavity: corresponding portions of $f(x)$ and $g(m)$ have the same concavity, in particular, where both curves possess second derivatives,
			$$g''(m) = 1/f''(x).$$
			\item\label{thmrefl} Reflective or involutive property of the transformation:
			$$\L\{\L\{f(x)\}(m)\}(a) = f(a).$$
		\end{enumerate}
	\end{theorem}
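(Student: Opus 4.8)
The plan is to derive all three parts from the parametric description of the dual curve in \eqref{mdef}--\eqref{ddef}, using $x$ as the curve parameter, so that each assertion reduces to a short chain-rule computation. The first thing to record is that on any arc where $f''(x)\neq 0$ equation \eqref{mdef} is locally invertible by the inverse function theorem, which produces \eqref{solveforx} and makes $g(m) = xf'(x) - f(x)$ with $x = (f')^{-1}(m)$ a genuine differentiable function of $m$; this is what makes it meaningful to speak of $g'$ and $g''$ at all.

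For part \ref{thmslope}, the relation $m = f'(x)$ is exactly \eqref{mdef}. For $x = g'(m)$, I would differentiate \eqref{ddef} along the curve with respect to $x$: the right-hand side gives $f'(x) + xf''(x) - f'(x) = xf''(x)$, while \eqref{mdef} gives $dm/dx = f''(x)$, so by the chain rule $g'(m) = xf''(x)/f''(x) = x$. For part \ref{thmmaint}, differentiate the identity $g'(m) = x$ once more with respect to $m$, obtaining $g''(m) = dx/dm = 1/(dm/dx) = 1/f''(x)$; in particular $g''(m)$ and $f''(x)$ have the same sign, which is the preservation of strict convexity and strict concavity on corresponding arcs. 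For part \ref{thmrefl}, apply the construction of Section 2 to $g$ in place of $f$: by the first two parts, wherever the hypotheses hold $g$ is differentiable with nonzero second derivative, so $\L\{g(m)\}$ is described parametrically, now with parameter $m$, by new slope $a = g'(m)$ and new ordinate $m\,g'(m) - g(m)$. By part \ref{thmslope}, $a = g'(m) = x$, and substituting $g(m) = xf'(x) - f(x)$ and $m = f'(x)$ yields $m\,g'(m) - g(m) = f'(x)\,x - (xf'(x) - f(x)) = f(x) = f(a)$, which is the claim.

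The only real subtlety --- the step I would be most careful to justify --- is the change of variables between $x$ and $m$: one has to invoke local invertibility of $f'$ (via the inverse function theorem, restricting to a subinterval on which $f''$ is finite, continuous, and nonvanishing) both to know that $g$ is a well-defined function and to legitimize writing $dx/dm = 1/f''(x)$ and the reciprocals above. With that in hand, the three computations are routine, and part \ref{thmrefl} in fact uses only parts \ref{thmslope} and \ref{thmmaint} together with the defining recipe \eqref{mdef}--\eqref{ddef}.
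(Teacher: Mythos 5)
Your proposal is correct and follows essentially the same route as the paper: the parametric representation \eqref{mdef}--\eqref{ddef} with a chain-rule computation for \ref{thmslope}, a second differentiation of $g'(m)=x$ for \ref{thmmaint}, and an application of the defining construction to $g$ (reading off $m\,g'(m)-g(m)=f(x)$ at slope $a=x$) for \ref{thmrefl}, which is the same computation the paper phrases as writing down the tangent line $d=am-f(a)$ in $m,d$-space. Your explicit appeal to the inverse function theorem to justify treating $g$ as a differentiable function of $m$ is a point the paper leaves implicit, but it does not change the argument.
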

	\begin{proof}
		For \ref{thmslope}, by definition $m = y'(x) = f'(x)$. Let $\L\{f(x)\}(m) = g(m)$ be given by the parametric equations
		\begin{align*}
			m &= f'(a)\\
			d &= g(m(a)) = af'(a) - f(a).
		\end{align*}
		Thus, $\frac{dm}{da}=f''(a)$. Then,
		$$\frac{dg}{dm} = \frac{dg}{da}\frac{da}{dm} = (f'(a) + af''(a) - f'(a))\frac{1}{f''(a)} = a$$
		for any value of $a$.
		
		For \ref{thmmaint}, apply $\frac{d}{da}$ to $g'(m) = a$ to obtain $g''(m) \frac{dm}{da} = g''(m) f''(a) = 1$.
		
		For \ref{thmrefl}, in $m,d$-space, for any given value of $a$, the equation of the corresponding tangent line is
		$$d = a(m - f'(x)) + af'(a) - f(a) = am - f(a).$$
		Therefore, $\L\{g(m)\}(a) = f(a)$ as claimed.
	\end{proof}
	Theorem \ref{thmleg}\ref{thmslope} provides the locations of points where first derivatives are zero, if any such points exist, in terms of values of the dual function.  For $y = f(x)$, from $m = f'(x) = 0, x = g'(m) = g'(0)$ and $y = f(x(m)) = f(x(0)) = 0x - g(0)$. Any zeros of the derivative occur at $(x,y) = (g'(0),-g(0))$. Similarly, any zeros of the derivative of $d = g(m)$ occur at $(m,d) = (f'(0),-f(0))$.

	\ref{table}, which is in \ref{app}, in contains pairs of curves in the two dual spaces that are related by the Legendre transformation. There are four main ways for determining the entries of the table. We illustrate each method with an example. For a given curve, differing methods might be applied to various portions of the curve. Usually, more than one method can be successfully employed.
	
	From the equation of a tangent or supporting line $y = mx - g(m)$, the Legendre transformation of $y = f(x)$ is 
	\begin{equation}
		\L\{f(x)\}(m) = g(m) = mx(m) = f(x(m)).\label{lif}
	\end{equation}
	
	\begin{method}[using lines of tangency and the inverse of the derivative of $y = f(x)$]\label{methinv}
		For differentiable functions $f(x)$, whose derivatives have an inverse, from Theorem \ref{thmleg}\ref{thmslope},
		\begin{equation}
			x(m) = (f')^{-1}(m).\label{inv}
		\end{equation}
		Substituting \eqref{inv} into \eqref{lif} gives
		\begin{eqnarray}
			d = g(m) = \L\{f(x)\}(m) = mx(m) - f(x(m)) = m(f')^{-1}(m) - f((f')^{-1}(m)).\label{mff}
		\end{eqnarray}
	\end{method}
	\begin{example}[using lines of tangency in Method \ref{methinv}]
		Consider the function
		\begin{equation}
			y = f(x) = \frac{x^{3}}{3}\label{tangex}
		\end{equation}
		for $x\in\bbR^{+}$. Placing $x(m) = (f')^{-1}(m) = m^{1/2}$ into \eqref{mff} gives the dual curve
		\begin{equation}
			d = g(m) = \L\{f(x)\}(m) = m(m^{1/2}) - \frac{(m^{1/2})^{3}}{3} = \frac{2m^{3/2}}{3}\label{Lx33}
		\end{equation}
		for $m\in\bbR^{+}$. The convexity of \eqref{tangex} and \eqref{Lx33} illustrates Theorem \ref{thmleg}\ref{thmmaint}. Equations \eqref{tangex} and \eqref{Lx33} are Entry \ref{xpp} of \ref{table} in \ref{app} with $p=3$ and $q=3/2$.
	\end{example}

	\begin{method}[using lines of support]\label{methsupp}
		For lines of support, the negation of the lines' $y$-intercept for each value of $m$ is the value of $g(m)$.
	\end{method}
	\begin{example}
		Consider the convex function
		\begin{equation}
			y = f(x) = \begin{cases}
				-x + 2 & x\leq 1,\\
				2x - 1 & 1 < x.
			\end{cases}\label{supportex}
		\end{equation}
		The transformation of the line segment $y = -x + 2$ for $x < 1$ is the point $(m,d) = (-1,-2)$. The transformation of the line segment $y = 2x - 1$ for $1 < x$ is the point $(m,d) = (2,1)$. At $x = 1$, the lines of support of \eqref{supportex} are $y = mx + (1 - m)$ for $-1 < m < 2$, so that the transformation of the point $(x,y) = (1,1)$ is the open line segment $d = m - 1$ with $-1 < m < 2$, whose end points are $(-1,-2)$ and $(2,1)$. Thus,
		$$d = g(m) = m - 1\text{ for }-1\leq m\leq 2.$$
		This example illustrates how line segments and points that are vertices transform into each other.
	\end{example}
	\begin{method}[using the integral form]\label{methint}
		The integral form of the Legendre transformation $y = f(x)$, whose derivative has an inverse, is
		\begin{equation}
			g(m) = \L\{f(x)\}(m) = \int (f')^{-1}(m)dm + C,\label{gint}
		\end{equation}
		which can be confirmed by showing the equality of the derivatives of \eqref{mff} and \eqref{gint} with respect to
		$m$. From Theorem \ref{thmleg}\ref{thmslope}, $m = f'(x) = f'(g'(m))$, and, thus, $(f')^{-1}(m) = g'(m)$. The constant of
		integration can be determined from the slope and negation of the $y$-intercept of the tangent line at a point of $y = f(x)$. Selecting the point where $m = m_{0}$, \eqref{gint} can be expressed
		\begin{equation}
			g(m) = \int_{m_{0}}^{m}(f')^{-1}(t)dt + g(m_{0}).\label{defint}
		\end{equation}
	\end{method}
	\begin{example}[using the integral form in Method \ref{methint}]
		For $y = f(x) = \cos x$ with $-\pi/2 < x < \pi/2$, we have $f'(x) = -\sin x = m$ and $(f')^{-1}(m) = -\Sin m$ for $-1 < m < 1$. The integral of the inverse sine function is
		$$\int(-\Sin m)dm = -m\Sin m - \sqrt{1 - m^{2}} + C.$$
		From \eqref{defint}, employing the point $(0, 1)$ of $y = f(x)$, where $m_{0} = 0$ and the negation of the $y$-intercept of the tangent line is $g(m_{0}) = g(0) = -1$, obtain
		\begin{align*}
			g(m) = \L\{\cos x\}(m) &= \int_{0}^{m} (-\Sin t)dt + (-1)\\
			&= \left(-m\Sin m - \sqrt{1 - m^{2}}\right) - \left(-0\Sin 0 - \sqrt{1 - 0^{2}}\right) - 1\\
			&= -m\Sin m - \sqrt{1 - m^{2}}
		\end{align*}
		for $-1<m<1$, which is Entry \ref{cos} of \ref{table} in \ref{app}.
	\end{example}
	\begin{method}[using a limit of the negation of $y$-intercepts]\label{methsup}
		For a convex function $y = f(x)$, or a portion of the function that is convex, the lines of support or tangency for each value of $m$ give
		\begin{equation}
			d = g(m) = \L\{f(x)\}(m) = \sup_{x}\{mx - f(x)\},\label{sup}
		\end{equation}
		where $\sup$ denotes supremum, which is also called the least upper bound. Recall that all convex functions are continuous \cite{Lay2007,Tiel1984ConvexAA,Bauschke2017}. This method determines the line with the largest $y$-intercept among all lines with slope $m$ and below the convex function. For the strictly concave function $y = f(x)$, write
		$$d = g(m) = \L\{f(x)\}(m) = \inf_{x}\{mx - f(x)\},$$
		where $\inf$ denotes infimum, which is also called the greatest lower bound.
	\end{method}
	\begin{example}[using the interpretation as the limit in Method 4]
		To derive Entry \ref{ex} of \ref{table} in \ref{app}, where $y = f(x) = e^{x}$ is convex for $x\in\bbR$ and has positive slope, select any positive value for $m$ and set
		$$\frac{d}{dx}(mx - e^{x}) = m - e^{x} = 0.$$
		Then, $x = \ln m$. Substituting this into \eqref{sup} gives
		$$d = g(m) = \L\{f(x)\}(m) = m(\ln m) - e^{\ln m} = m\ln m - m$$
		for $m > 0$.
	\end{example}
	
	Method \ref{methsup} is effective for deriving properties in Part \ref{props} of the table. For example, for Entry \ref{fpa} of the table, for convex functions $y$,
	$$\L\{y(x) + a\}(m) = \sup_{x}\{mx - (y(x) - a)\} = \sup_{x}\{mx - y(x)\} - a = \L\{y(x)\}(m) - a.$$
	
	With most modern plotting software, such as Desmos, GeoGebra, and MATLAB, one can use the following test in order to rapidly verify that functions $f$ and $g$ are Legendre-transformation pairs on stated domains. From \eqref{ddef} and \eqref{mdef}, the functions are transformations of each other on the $x$-domain where
	$$y = xf'(x) - f(x) - g(f'(x))$$
	is the zero function. The $m$-domain for the pair is the range of
	$$y = f'(x)$$
	over the $x$-domain that was just confirmed. Many other options for similar authentications are available. These checks of the accuracy of the entries of the table do not replace this section's methods for creating entries.
	
	\section{The table of Legendre-transformation pairs}
	Because the purpose of \ref{table} in \ref{app} is to serve as a quick and easy source of
	Legendre-transformation pairs, there is some redundancy. For example, Entry \ref{combo} is an amalgam of Entries \ref{scaleout}-\ref{shiftin}. Entry \ref{ex} is $\L\{e^{x}\}(m) = m\ln m - m$ and Entry \ref{xln}, reading right-to-left, is
	$\L\{e^{x-1}\} = m\ln m$; Entry \ref{xln} can be obtained from Entry \ref{ex} using Entry \ref{shiftin} with $f(x) = e^{x}$
	and $a = -1$. Some entries are special cases of others. For example, Entry \ref{ex} is Entry \ref{ax} with $a=e$.
	
	The properties in Part \ref{props} are aids for expanding the applicability of the remainder of the
	table to new functions that are related to table entries by the properties. For example, to obtain
	the transformation of $\alpha x^{3} + \beta x^{2}$ for any nonzero $\alpha$ and $\beta$, consider Entry \ref{x3x2}, which gives the
	transformation of $x^{3}/3 + x^{2}/2$. Use Entry \ref{scalein} with $a = 3\alpha/(2\beta)$ to obtain the transformation of $(3\alpha x/(2\beta))^{3}/3 + (3\alpha x/(2\beta))^{2}/2$, then apply Entry \ref{scaleout} with $a = (2\beta)^{3}/(3\alpha)^{2}$ to find
	$$\frac{(2\beta)^{3}}{(3\alpha)^{2}}\left(\frac{1}{3}\left(\frac{3\alpha x}{2\beta}\right)^{3} + \frac{1}{2}\left(\frac{3\alpha x}{2\beta}\right)^{2}\right) = \alpha x^{3} + \beta x^{2}.$$
	
	Part \ref{props} supplies relationships between operations on variables and functions in one space
	and operations in the dual space. For example, Entry \ref{scaleout} shows that multiplying a function by $a >
	0$ in one space corresponds to \textit{epi-multiplication} by $a$ in the other space; epi-multiplication by $a$
	is dividing the independent variable by a and simultaneously multiplying the function by $a$. In
	applications, the variables and functions can have physical, economic, or other meanings and the
	properties describe how algebraic and other operations in one space influence variables and
	functions, and hence their meanings, in the other space. Entries \ref{scaleout}-\ref{combo} reflect how changes of
	coordinates in one space alter the transformed function.
	
	The infimal convolution or epi-sum in Entries \ref{sumfs} and \ref{px2} is discussed and used in \cite{Bauschke2017}
	and \cite{Tiel1984ConvexAA}. For differentiable functions, the infimum is the minimum that can be found by
	differentiation with respect to $t$, so that implementation of those two entries can be relatively
	straightforward.
	Parts \ref{alg}, \ref{exp}, and \ref{trig} contain algebraic, logarithmic and exponential, and trigonometric functions,
	respectively. The special functions in Part \ref{spe} are extensively used in many areas of physics and
	other disciplines \cite{bell2004special,lebedev1972special,50831}.
		
	The \textit{Lambert $W$ function}, which is the solution $w = w(x)$ of the transcendental equation $we^{w} = x$ and a subject of Part \ref{spe}, has become a much-used tool in physics \cite{Corless1996,doi:10.1080/07468342.2004.11922095} and other
	disciplines \cite{10.1214/11-AOAS457,10.1214/14-AOAS790,10.2112/JCOASTRES-D-17-00181.1,doi:10.4169/074683410X480276}. 
	Its history is contained in \cite{Hayes2005}, and many properties are in \cite{Corless1996} and \cite{50831}.
	Another name for the Lambert $W$ function is the \textit{product logarithm}. Numerical values are
	available in \textit{Mathematica}, \textit{Maple}, \textit{Matlab}, and \textit{Wolfram Alpha}. The function $y = W(x)$ is defined on the complex plane and has two real branches. The principal branch is the function $y = W_{0}(x) =
	W_{p}(x)$, which has domain $x\in (-1/e,\infty)$, contains the points $(-1/e,-1)$ and $(0,0)$ and is increasing,
	concave, and positive for $x\in [0,\infty)$. The other branch is the function $y = W_{-1}(x) = W_{m}(x)$, which
	has domain $x\in [-1/e,0)$ , contains the point $(-1/e,-1)$, is decreasing, convex for
	$x\in (-1/e,-2/e^{2})$, concave for $x\in (-2/e^{2},0)$, and asymptotically approaches the negative $y$-axis.
	
	\section{Using \ref{table}}
	For the first part of Entry \ref{sin} of \ref{table} in \ref{app}, the full transformation for all $m\in\bbR$ is
	\begin{equation}
		\L\{\sin(x)\}(m) = \begin{cases}
			(\pi/2)m-1 & \text{ for }m\leq 0,\\
			m\Cos m - \sqrt{1 - m^{2}} & \text{ for }0<m<1,\\
			0 & \text{ for }1\leq m,
		\end{cases}\label{usageex}
	\end{equation}
	by using the supporting lines at the endpoints of the domain of $y = f(x)$. Method \ref{methsupp} is employed to find the extension in \eqref{usageex}, which is outside $0 < m < 1$. Many entries have the aspect that only the nonlinear portion is displayed in the table, but line segments to the left and/or right can be taken to complete the function, as in \eqref{usageex}. When they have no physical or other meaning, these line segments are ignored. For simplicity, they are omitted from the table.
	
	Often, it is assumed that all functions whose Legendre transformations are sought are convex. Then, Theorem \ref{thmleg}\ref{thmmaint} implies that the Legendre transformation is convex, as well. Differentiable convex functions have derivatives that are invertible because the derivatives are monotonic and single valued \cite{Tiel1984ConvexAA}. Some entries of the table are split into parts, which depend on whether the portion of the function is convex or concave.
	
	The transformation can be applied to functions that are part convex and part concave, but there will most likely be complications and, sometimes, unexpected results. Theorem \ref{thmleg}\ref{thmmaint} shows that, if a function has a second derivative that is zero at a point, the corresponding point on the dual curve has an undefined second derivative. Example \ref{conex} illustrates some of these ideas.
	
	\begin{example}[function that has both concave and convex portions]\label{conex}
		The function $y = f(x) = \sin x$ for $-\pi/2 < x < \pi/2$ is in Entry \ref{sin}. The function is convex on the left and concave on the right. The function $y = f(x)$ and its transformation are displayed in Figures \ref{sinfig} and \ref{Lsinfig}. As $y = f(x)$ is traced left to right, starting at $(x,y) = (-\pi/2,-1)$, the corresponding points of its transformation are traced downward, starting at $(m,d) = (0,1)$. For each value of $x\neq 0$, the tangent lines to the sine function at points with coordinates $x$ and $-x$ have the same slope $m$ and different intercepts, which makes the transformation in Figure \ref{Lsinfig} a curve, not a function. The point of inflection $(0,0)$ in Figure \ref{sinfig} corresponds to the cusp at $(1,0)$ in Figure \ref{Lsinfig}. The symmetry in Figure \ref{sinfig} about the $m$-axis illustrates Entry \ref{odd}.
	\end{example}
	
	The Legendre transformation changes the function $f$ into the function $g$ by changing the independent variable from $x$ to $m$. In physical settings, there may be many other variables present, but the underlying process is unchanged. The transformation’s appearance might change because partial derivatives are employed, in order that other variables are held constant \cite{Alford2019}.
	
	Most domains are presented as open sets, but, in very many cases, boundary points can be
	filled in simply by substitution or by a limiting process.
	
	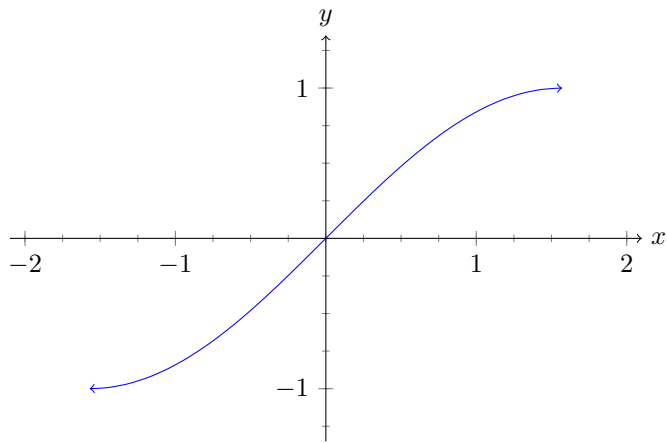
\begin{figure}[ht]
		\begin{center}
			\begin{tikzpicture}[scale=2]
				\foreach \i in {-2,-1.75,...,2} {
					\draw [very thin,gray] (\i,-.025) -- (\i,.025);
				}
				\foreach \i in {-1.25,-1,...,1.25} {
					\draw [very thin,gray] (-.025,\i) -- (.025,\i);
				}
				\draw [thin,gray] (2,-.05) -- (2,.05) node[below, yshift=-.2cm, black] {$2$};
				\draw [thin,gray] (1,-.05) -- (1,.05) node[below, yshift=-.2cm, black] {$1$};
				\draw [thin,gray] (-1,-.05) -- (-1,.05) node[below, yshift=-.2cm, black] {$-1$};
				\draw [thin,gray] (-2,-.05) -- (-2,.05) node[below, yshift=-.2cm, black] {$-2$};
				\draw [thin,gray] (-.05,1) -- (.05,1) node[left, xshift=-.2cm, black] {$1$};
				\draw [thin,gray] (-.05,-1) -- (.05,-1) node[left, xshift=-.2cm, black] {$-1$};
				
				\draw[->] (-2.1, 0) -- (2.1, 0) node[right] {$x$};
				\draw[->] (0, -1.35) -- (0, 1.35) node[above] {$y$};
				\draw[scale=1, domain=-1.57:1.57, smooth, variable=\x, blue, <->] plot ({\x}, {sin(180*\x/pi)});
			\end{tikzpicture}
		\end{center}
		\caption{$y = f(x) = \sin x$ for $-\pi/2 < x < \pi/2$}\label{sinfig}
	\end{figure}

	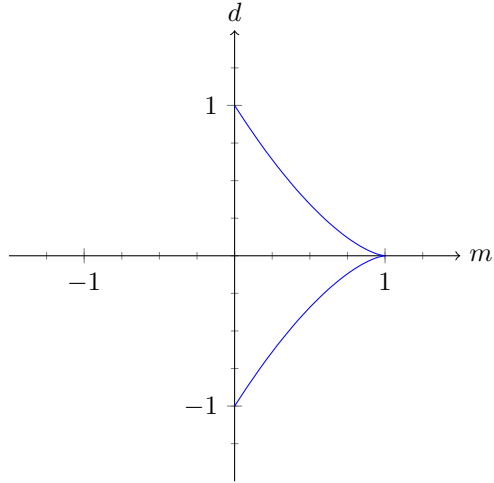
\begin{figure}[ht]
		\begin{center}
			\begin{tikzpicture}[scale=2]
				\foreach \i in {-1.25,-1,...,1.25} {
					\draw [very thin,gray] (\i,-.025) -- (\i,.025);
				}
				\foreach \i in {-1.25,-1,...,1.25} {
					\draw [very thin,gray] (-.025,\i) -- (.025,\i);
				}
				\draw [thin,gray] (1,-.05) -- (1,.05) node[below, yshift=-.2cm, black] {$1$};
				\draw [thin,gray] (-1,-.05) -- (-1,.05) node[below, yshift=-.2cm, black] {$-1$};
				\draw [thin,gray] (-.05,1) -- (.05,1) node[left, xshift=-.2cm, black] {$1$};
				\draw [thin,gray] (-.05,-1) -- (.05,-1) node[left, xshift=-.2cm, black] {$-1$};
				
				\draw[->] (-1.5, 0) -- (1.5, 0) node[right] {$m$};
				\draw[->] (0, -1.5) -- (0, 1.5) node[above] {$d$};
				\draw[scale=1, domain=0:1, smooth, variable=\m, blue] plot ({\m}, {-pi/180*\m*asin(sqrt(1 - \m*\m)) + sqrt(1 - \m*\m)});
				\draw[scale=1, domain=0:1, smooth, variable=\m, blue] plot ({\m}, {pi/180*\m*asin(sqrt(1 - \m*\m)) - sqrt(1 - \m*\m)});
			\end{tikzpicture}
		\end{center}
		\caption{$d = g(m) = \L\{\sin(x)\}(m)$ from Entry \ref{sin} is the Legendre transformation of the function in Figure \ref{sinfig}}\label{Lsinfig}
	\end{figure}
	
	\section{Approximating polynomials}
		The Lagrange inversion or Lagrange-B\"urmann theorem pertains to finding dual functions. It says
		that, given an analytic function $y = r(x)$ with Taylor series about $x = x_{0}$ and nonzero derivative at
		$x = x_{0}$, a Taylor series about $y_{0} = r(x_{0})$ of the inverse function $x = s(y)$ can be found and has a
		non-zero radius of convergence. Contour integration and various other techniques can be
		employed \cite{50831,https://doi.org/10.1002/zamm.19630430916,Abramowitz1972}. \cite{dienes1957taylor} performs inversion by the method of indeterminate
		coefficients.
		
		For the present application, $y = r(x) = f'(x)$ and $x = s(y) = g'(y)$, which are inverse
		functions by Theorem \ref{thmleg}\ref{thmslope}. Because all derivatives may not exist for functions of interest, we
		find approximating polynomials in the dual space. These polynomials would be terms of the
		Taylor series. Interesting ideas, especially about this type of potentially limited use of Lagrange
		inversion, can be found in \cite{Grossman2005}, \cite{Johnson2002}, \cite{Krattenthaler1988}, and \cite{Traub1962}.
		
		Polynomials that approximate $d = g(m)$ near $m = m_{0}$ can be found without first finding the
		function $g$. Each term of the approximating polynomial, which are the terms of the Taylor
		expansion of $g$, is a function of the derivatives of $y = f(x)$ at $x = x_{0}$ up to the same degree as the
		polynomial in $m, d$-space. The terms of the polynomial approximation of $g$ about $m=m_{0}$ are
		determined by the terms of the polynomial of the same degree of approximation of $f(x)$ at $x = x_{0}$.
		Besides differentiability of $f$ at $x = x_{0}$ to the degree desired, it is required that $f''(x_{0}) \neq 0$.
		
		From the definition of $m = f'(x)$, 
		$$m_{0} = f'(x_{0}).$$
		The defining equation of $d = g(m)$ is from the tangent line $y=mx - d$ so that $g = mx - f$ and
		$$g(m_{0}) = x_{0}f'(x_{0}) - f(x_{0}).$$
		From Theorem \ref{thmleg}\ref{thmslope}, $g'(m) = x$, so that
		$$g'(m_{0}) = x_{0}.$$
		From Theorem \ref{thmleg}\ref{thmmaint}, $g''(m) = 1/f''(x)$, so that
		$$g''(m_{0}) = \frac{1}{f''(x_{0})}.$$
		A recursion formula for $g^{(n+1)}(m)$ for integers $n\geq 2$ is available. Beginning with $g''(m) = 1/f''(x)$, differentiate with respect to $x$ to obtain
		$$\frac{dg''(m)}{dm}\frac{dm}{dx} = g'''(m)f''(x) = \frac{d}{dx}\frac{1}{f''(x)} = -\frac{f'''(x)}{(f''(x))^{2}}$$
		and 
		$$g'''(m) = \frac{1}{f''(x)}\frac{d}{dx}\frac{1}{f''(x)} = -\frac{f'''(x)}{(f''(x))^{3}},$$
		so that
		$$g'''(m_{0}) = -\frac{f'''(x_{0})}{(f''(x_{0}))^{3}}.$$
		Continuing to differentiate similarly, we obtain
		\begin{align*}
			g^{(4)}(m) &= \frac{1}{f''(x)}\frac{d}{dx}\left(-\frac{f'''(x)}{(f''(x))^{3}}\right) = \frac{3(f'''(x))^{2} - f''(x)f^{(4)}(x)}{(f''(x))^{5}},\\
			g^{(5)}(m) &= \frac{1}{f''(x)}\frac{d}{dx}\left(\frac{3(f'''(x))^{2} - f''(x)f^{(4)}(x)}{(f''(x))^{5}}\right) = \frac{10f''(x)f'''(x)f^{(4)}(x) - 15(f'''(x))^{3} - (f''(x))^{2}f^{(5)}(x)}{f''(x)^{7}},
		\end{align*}
		and so forth. The right-hand sides can be evaluated at $x=x_{0}$. To obtain the next higher derivative, and thus the next term, divide the derivative of the current derivative as a function of $x$ by $f''(x)$ according to the recursion formula
		$$(g^{(n+1)}(m))(x) = \frac{1}{f''(x)}\frac{d}{dx}(g^{(n)}(m))(x).$$
	
		\begin{example}[product of functions $y=x\sin x$]
			The Taylor series for $y = f(x) = x\sin x$ about $x=0$ is
			\begin{align*}
				y = f(x) = \sum_{i=0}^{\infty} f^{(i)}(0)\frac{x^{i}}{i!} = x\sin x = x\sum_{i=0}^{\infty}(-1)^{i} \frac{x^{2i+1}}{(2i+1)!} = \sum_{i=1}^{\infty} (-1)^{i-1}\frac{x^{2i}}{(2i-1)!},
			\end{align*}
			which converges for all real numbers \cite{50831}. Thus, $m_{0} = f'(0)$, the odd derivatives of $f$ at $x=0$ are zero, and, for all nonnegative integers $n$, $f^{(2n)}(0) = (-1)^{n+1}(2n)$. Then, at $m = m_{0} = 0$, $g(0) = 0, g'(0) = 0, g''(0) = \frac{1}{2}, g'''(0) = 0,$ and
			$$g^{(4)}(0) = \frac{3(f'''(0))^{2} - f''(0)f^{(4)}(0)}{(f''(0))^{5}} = \frac{3(0)^{2} - (2)(-4)}{(2)^{5}} = \frac{1}{4},$$
			so the first five terms of the approximating polynomial at $m=0$ to the Legendre transformation of $x\sin x$ is
			$$0 + 0m + \frac{1}{2}\frac{m^{2}}{2!} + 0\frac{m^{3}}{3!} + \frac{1}{4}\frac{m^{4}}{4!} = \frac{m^{2}}{4} + \frac{m^{4}}{96}.$$
		\end{example}

	\section{Closing comments}
		Clairaut’s differential equation
		\begin{equation}
			y = xy' + h(y')\label{clair}
		\end{equation}
		is a expression of the Legendre transformation between two differentiable functions \cite{Ince1944,Sternberg1954}. This can be seen by replacing $y$ by $f$, $y'$ by $m$, and the function $h$ by the function $-g$. The singular solution of \eqref{clair} is the Legendre transformation of the function $g(m)$ and vice versa by the reflectivity property in Theorem \ref{thmleg}\ref{thmrefl}. The general solution of \eqref{clair}, which contains a constant of integration as a parameter, is the tangent lines to the singular solution \cite{Rainville1964,zill2008differential}. \ref{table} in \ref{app} is an integral table for Clairaut’s differential equation \cite{Sternberg1954}. Clairaut’s equation and the Legendre transformation are displayed in the formula for integration by parts with transformation are displayed in the formula for integration by parts with $\int f'(x)dx = mx - \int g'(m)dm$.
		
		Besides $m,d$-space, there are many spaces that are dual to $x,y$-space. Each dual space is formulated in terms of the coefficients of a standard form for the equations of lines that serve as tangent and supporting lines to curves in $x,y$-space \cite{kilner2021tangent}. Here, $m$ and $d$ from $y = mx - g(m)$ yield curves $d = g(m)$ that are dual to $y = f(x)$, which has the lines as supporting or tangent lines. Hence, $f(x)$ and $g(m)$ are Legendre transformation pairs.
		
		Using $y = mx + b(m)$ to give points in $m,b$-space is often called the Legendre transformation. This transformation is intuitive, because the $y$-intercept $b$ is a coordinate in the dual space, but the reflexivity property is lost. In order to use the table for this alternative definition, the adjustment can be made between $b$ and $d$ \cite{Alford2019,kilner2021tangent}.
		
		Another example is $u,v$-space, where the coefficients $u$ and $v$ of
		\begin{equation}
			ux+vy = 1 \label{uv}
		\end{equation}
		in $x,y$-space give the dual curves $v = w(u)$ \cite{Thompson1996}. The pairs in the table can be converted to relationships for the $u$ and $v$ coefficients using the identities $m = –u/v$ and $d = 1/v$ \cite{kilner2021tangent}.  Different lines in $x,y$-space may have no representations, thus losing various portions of curves. In $x,y$-space, vertical lines have undefined slope $m$ and, therefore, neither $m,d$-representation nor $m,b$-representation, and lines through the origin of $x,y$-space have no $u,v$-representation because the lines cannot be written as \eqref{uv}. Accommodations or adjustments in order to include omitted lines can be made by introducing points at infinity, but that can sometimes defeat the goal of simple or physical representations.

	\pagebreak
	
	\appendix
	\section{\ Table 1: Legendre-transformation pairs}\label{app}\tablelabel
	\begin{table}[H]
		\begin{subtable}{\textwidth}
			\subtablelabel{Properties}\label{props}
			\begin{center}$
				\begin{tabular}{|c|L{5cm}|L{5cm}|P{4cm}|}
					\firstrowwl{scaleout}
					af(x) & ag\left(\frac{m}{a}\right) & $a\neq 0$\nlwl{scalein}
					f(ax) & g\left(\frac{m}{a}\right) & $a\neq 0$\nlwl{fpa}
					f(x) + a & g(m) - a & \nlwl{shiftin}
					f(x + a) & g(m) - am &\nlwl{combo}
					cf(sx+t) + bx + a & cg\left(\frac{m - b}{cs}\right) - t\frac{m - b}{s} - a & $c\neq 0, s\neq 0$\nl
					f^{-1}(x) & -mg\left(\frac{1}{m}\right) & $m\neq 0$\nl
					f'(x) & mf''^{-1}(m) - f'(f''^{-1}(m)) &\nl
					\int_{a}^{x} f(t)dt & mf^{-1}(m) - \int_{a}^{f^{-1}(m)} f(t)dt&\nlwl{sumfs}
					(f_{1}\square f_{2})(x) =\newline \inf_{t}\{f_{1}(x-t) + f_{2}(t)\} & \L\{f_{1}(x)\}(m) + \L\{f_{2}(x)\}(m) & {$f_{1}(x)$ and $f_{2}(x)$ convex, $(f_{1}\square f_{2})(x)$ is the \textit{infimal convolution} or \textit{epi-sum}}\nlwl{px2}
					f(x) + \frac{x^{2}}{2} & g(m)\square \frac{m^{2}}{2} & $f(x)$ convex\nl
					f(x)\text{ even, i.e., }f(-x) = f(x) & g(m)\text{ even} & \nlwl{odd}
					f(x)\text{ odd, i.e., }f(-x) = -f(x) & g^{-1}(m)\text{ even} & \enl
				\end{tabular}$
			\end{center}
		\end{subtable}
	\end{table}

	\begin{table}[H]
		\begin{subtable}{\textwidth}
			\subtablelabel{Algebraic Functions}\label{alg}
			\begin{center}$
				\begin{tabular}{|c|L{5cm}|L{5cm}|P{4cm}|}
					\firstrowwl{xpp}
					\frac{x^{p}}{p} & \frac{m^{q}}{q} & $0 < x, 0 < m$,\newline $\displaystyle \frac{1}{p} + \frac{1}{q} = 1, p\neq 0\text{ or }1$
					\nl\sqrt{1-x^{2}}  & -\sqrt{1 + m^{2}} & $|x| < 1$\nl
					(1-x^{p})^{1/p}  & -(1 + (-m)^{q})^{1/q} & $0\leq x < 1, m\leq 0, \displaystyle\frac{1}{p} + \frac{1}{q} = 1, 1 < p$
					\nl\sqrt{x^{2} - 1}  & \sqrt{m^{2} - 1} & $1<|x|, 1<|m|$
					\nlwl{x3x2}\frac{x^{3}}{3} + \frac{x^{2}}{2}  & -\frac{1}{12} - \frac{m}{2} - \frac{(1 + 4m)^{3/2}}{12} & $x < -\frac{1}{2}, -\frac{1}{4} < m$\hnl
					-\frac{1}{12} - \frac{m}{2} + \frac{(1 + 4m)^{3/2}}{12} & $-\frac{1}{2} < x, -\frac{1}{4} < m$\nlwl{2pp1} \frac{x^{2p+1}}{2p+1} + \frac{x^{p+1}}{p+1}  & \left(\frac{-1 + \sqrt{1 + 4m}}{2}\right)^{1/p}\times\newline\frac{p}{2(2p+1)(p+1)}\times\newline
					\left(4(p+1)m+1-\sqrt{1+4m}\right) & $0 < x, 0 < m,\newline p\neq -\frac{1}{2}, -1, 0$\nl
					\frac{x^{2}}{2} + \frac{2x^{3/2}}{3}  & \frac{6m^{2} + 6m + 1 - (1+4m)^{3/2}}{12} & $0 < x, 0 < m$, this is Entry \ref{2pp1} with $p = \frac{1}{2}$\nl
					\frac{x^{5}}{5} + \frac{x^{3}}{3}  & \left(\frac{-1 + \sqrt{1 + 4m}}{2}\right)^{1/2}\times\newline\frac{1}{15}
					\left(12m+1-\sqrt{1+4m}\right) & $0 < x, 0 < m$, this is Entry \ref{2pp1} with $p = 2$\enl
				\end{tabular}$
			\end{center}
		\end{subtable}
	\end{table}
	\begin{table}[H]
		\begin{subtable}{\textwidth}
			\begin{center}$
				\begin{tabular}{|c|L{5cm}|L{5cm}|P{4cm}|}
					\firstrow
					ax - b & b & $m=a$\nl
					|x| & 0 & $|m|\leq 1$\nl
					-cx\text{ for } -a\leq x\leq 0 \newline\text{ and }
					x\text{ for }0 < x\leq b & -a(m+c)\text{ for }m\leq -c,\newline 0\text{ for } -c < m\leq 1,\text{ and}\newline b(m-1)\text{ for } 1 < m & \ \newline$0 < a, 0 < b, 0 < c$\nl
					R(x) = \max\{0, x\} & 0 & $0 < m\leq 1$, $R(x)$ is the \textit{unit ramp function}\nl
					\frac{x}{1-x} & (\sqrt{m}-1)^{2} & $x < 1, 0 < m$\hnl
					(\sqrt{m}+1)^{2} & $1 < x, 0 < m$\nl
					\frac{x^{2}}{x+1} = \frac{1}{x+1} + x - 1 & 2 - m + 2\sqrt{1-m} & $x < -1, m < 1$\hnl
					2 - m - 2\sqrt{1-m} & $-1 < x, m < 1$\nl
					\frac{x^{2}}{2}\text{ for } |x|<a, \text{ and }\newline
					a|x| - \frac{a^{2}}{2}\text{ for }|x|\geq a & \frac{m^{2}}{2} & $0\leq |m|\leq a, 0 < a, f(x)$ is the \textit{Huber loss function}\nl
					ax^{2} + bx + c&\frac{(m - b)^{2}}{4a} - c = \frac{m^{2} -2bm + b^{2}-4ac}{4a}&$a\neq 0$\nl
					ax^{3} + bx^{2} + cx + d & \frac{1}{27a^{2}}(2(3a(m - c) + b^{2})^{3/2} - b(9a(m-c) + 2b^{2})) - d & $-b/(3a) < x,\newline (3ac-b^{2})/(3a) < m,\newline 0 < a$
					\enl
				\end{tabular}$
			\end{center}
		\end{subtable}
	\end{table}
	
	\begin{table}[H]
		\begin{subtable}{\textwidth}
			\subtablelabel{Exponential and Logarithmic Functions}\label{exp}
			\begin{center}$
				\begin{tabular}{|c|L{5cm}|L{5cm}|P{4cm}|}
					\firstrowwl{ex}
					e^{x} & m\ln m - m & $0 < m$\nlwl{ax}
					a^{x} & \frac{m}{\ln a}\ln\left(\frac{m}{\ln a}\right) - \frac{m}{\ln a} & $0 < a, a\neq 1, \sign(m) = \sign(a-1)$\nl
					\frac{1}{1 + e^{-x}} = \frac{e^{x}}{1 + e^{x}} & m\ln\frac{1 - 2m - \sqrt{1 - 4m}}{2m} - \frac{1}{2}(1 - \sqrt{1 - 4m}) & $x < 0, 0 < m < \frac{1}{4}, (a + be^{-cx})^{-1}$ with $0 < a,b,c$ is the \textit{logistic} or \textit{growth curve} and the \textit{logistic cumulative distribution function}\hnl
					m\ln\frac{1 - 2m + \sqrt{1 - 4m}}{2m} - \frac{1}{2}(1 + \sqrt{1 - 4m}) & $0 < x, 0 < m < \frac{1}{4}$\nl
					\frac{1}{2}\frac{1 - e^{-x}}{1 + e^{-x}} = \frac{1}{1 + e^{-x}} - \frac{1}{2} & m\ln\frac{1 - 2m - \sqrt{1 - 4m}}{2m} + \frac{1}{2}\sqrt{1 - 4m} & $x < 0, 0 < m < \frac{1}{4}$\hnl
					m\ln\frac{1 - 2m + \sqrt{1 - 4m}}{2m} - \frac{1}{2}\sqrt{1 - 4m} & $0 < x, 0 < m < \frac{1}{4}$\nl
					\ln(x) & 1 + \ln m & $0 < x, 0 < m$\nlwl{xln}
					x\ln(x) & e^{m-1} & $0 < x$\nl
					\logit(x) = \ln \frac{x}{1 - x} & \frac{1}{2}(m - \sqrt{m(m-4)}) + \ln\left(\frac{1}{2}(m - 2 + \sqrt{m(m-4)})\right) & $0 < x < \frac{1}{2}, 4 < m$,\newline $\logit(x)$ is the \textit{log odds} or \textit{logit function}\hnl
					\frac{1}{2}(m + \sqrt{m(m-4)}) + \ln\left(\frac{1}{2}(m - 2 - \sqrt{m(m-4)})\right) & $\frac{1}{2} < x < 1, 4 < m$
					\enl
				\end{tabular}$
			\end{center}
		\end{subtable}
	\end{table}

	\begin{table}[H]
		\begin{subtable}{\textwidth}
			\begin{center}$
				\begin{tabular}{|c|L{5cm}|L{5cm}|P{4cm}|}
					\firstrow
					\ln \frac{x}{x + 1} & -\frac{1}{2}(\sqrt{m(m+4)} + m) + \ln\left(\frac{1}{2}(m + 2 - \sqrt{m(m+4)})\right) & $x < -1, 0 < m$\hnl
					\frac{1}{2}(\sqrt{m(m+4)} - m) + \ln\left(\frac{1}{2}(m + 2 + \sqrt{m(m+4)})\right) & $0 < x, 0 < m$\nl
					\ln(x) - \frac{1}{x} & \sqrt{1 + 4m} + \ln\left(\frac{1}{2}(\sqrt{1 + 4m} - 1)\right) & $0 < x, 0 < m$\nl
					\ln(x) + 2\sqrt{x} & \frac{2m - 1 - \sqrt{1 + 4m}}{2m} + 2\ln\left(\frac{1}{2}(\sqrt{1 + 4m} - 1)\right) & $0 < x, 0 < m$\nl
					\ln(x) + \frac{x^{2}}{2} & \frac{1}{4}(m^{2} + 2 - m\sqrt{m^{2} - 4}) + \ln\left(\frac{1}{2}(m + \sqrt{m^{2} - 4})\right) & $0 < x < 1, 2 < m$\hnl
					\frac{1}{4}(m^{2} + 2 + m\sqrt{m^{2} - 4}) + \ln\left(\frac{1}{2}(m - \sqrt{m^{2} - 4})\right) & $1 < x, 2 < m$\nl
					x(\ln(x))^{2} - 2x\ln x + 2x & -2e^{-\sqrt{m}}(\sqrt{m} + 1) & $0 < x < 1, 0 < m$\hnl
					2e^{\sqrt{m}}(\sqrt{m} - 1) & $1 < x, 0 < m$\nl
					\ln(1 + e^{x}) & m\ln m + (1 - m)\ln(1 - m) & $0 < m < 1, g(m)$ is the contribution to the \textit{Fermi-Dirac entropy} by a state with the probability $m$
					\enl
				\end{tabular}$
			\end{center}
		\end{subtable}
	\end{table}

	\begin{table}[H]
		\begin{subtable}{\textwidth}
			\begin{center}$
				\begin{tabular}{|c|L{5cm}|L{5cm}|P{4cm}|}
					\firstrow
					-\ln(1 - e^{x}) & m\ln m - (1 + m)\ln(1 + m) & $x < 0, 0 < m,$\newline $g(m)$ is the contribution of a state with $m$ particles to the \textit{Einstein-Bose entropy}\nl
					\ln(1 - e^{-x}) & (1 + m)\ln(1 + m) - m\ln m & $0 < x, 0 < m$\nl
					\sinh x & -m\ln(m + \sqrt{m^{2} - 1}) + \sqrt{m^{2} - 1} & $x < 0, 1 < m$\hnl
					m\ln(m + \sqrt{m^{2} - 1}) - \sqrt{m^{2} - 1} & $0 < x, 1 < m$\nl
					\cosh x & m\ln(m + \sqrt{m^{2} + 1}) - \sqrt{m^{2} + 1} & $x < 0, m < 0$ and $0 < x, 0 < m$\nl
					\tanh x & -m\ln\frac{1 + \sqrt{1 - m}}{\sqrt{m}} + \sqrt{1 - m} & $x < 0, 0 < m < 1$\hnl
					m\ln\frac{1 + \sqrt{1 - m}}{\sqrt{m}} - \sqrt{1 - m} & $0 < x, 0 < m < 1$\nl
					\sinh^{2} x & \frac{1}{2}(m\ln(m + \sqrt{m^{2} + 1}) - 1 - \sqrt{m^{2} + 1}) &\nl
					\cosh^{2} x & -(\sqrt{1 - m^{2}} + \ln m - \ln(1 + \sqrt{1 - m^{2}})) &\nl
					\ln \sinh x & \frac{m}{2}\ln\frac{1 + m}{1 - m} + \frac{1}{2}\ln(1 - m^{2}) & $0 < x, m < 1$\nl
					\ln \cosh x & \frac{m}{2}\ln\frac{1 + m}{1 - m} + \frac{1}{2}\ln(1 - m^{2}) & $x < 0, -1 < m < 0$ and $0 < x, 0 < m < 1$\nl
					\ln \tanh x & \frac{m}{2}\ln\frac{\sqrt{m^{2} + 4} + 2}{m} + \ln\frac{\sqrt{m^{2} + 4} + m}{2} & $0 < x, 0 < m$
					\enl
				\end{tabular}$
			\end{center}
		\end{subtable}
	\end{table}

	\begin{table}[H]
		\begin{subtable}{\textwidth}
			\begin{center}$
				\begin{tabular}{|c|L{5cm}|L{5cm}|P{4cm}|}
					\firstrow
					\sinh^{-1} x =\ln(x + \sqrt{x^{2} + 1}) & -\sqrt{1 - m^{2}} + \ln\frac{1 + \sqrt{1 - m^{2}}}{m} & $x < 0, 0 < m < 1$\hnl
					\sqrt{1 - m^{2}} + \ln\frac{1 - \sqrt{1 - m^{2}}}{m} & $0 < x, 0 < m < 1$\nl
					
					\cosh^{-1} x = \ln(x + \sqrt{x^{2} - 1}) & \sqrt{1 + m^{2}} - \ln\frac{1 + \sqrt{1 + m^{2}}}{m} & $1 < x, 0 < m$\nl
					
					\tanh^{-1} x = \frac{1}{2}\ln\frac{1+x}{1-x} & -\sqrt{m(m-1)} - \frac{1}{2}\ln(2m - 1 - 2\sqrt{m(m-1)})& $-1 < x < 0, 1 < m$\hnl
					\sqrt{m(m-1)} - \frac{1}{2}\ln(2m - 1 + 2\sqrt{m(m-1)})& $0 < x < 1, 1 < m$\nl
					
					\coth^{-1} x = \frac{1}{2}\ln\frac{x+1}{x-1} & \sqrt{m(m-1)} - \frac{1}{2}\ln(1 - 2m  - 2\sqrt{m(m-1)})& $x < -1, m < 0$\hnl
					-\sqrt{m(m-1)} - \frac{1}{2}\ln(1 - 2m + 2\sqrt{m(m-1)})& $1 < x, m < 0$
					\enl
				\end{tabular}$
			\end{center}
		\end{subtable}
	\end{table}

	\begin{table}[H]
		\begin{subtable}{\textwidth}
			\subtablelabel{Trigonometric Functions}\label{trig}
			\begin{center}$
				\begin{tabular}{|c|L{5cm}|L{5cm}|P{4cm}|}
					\firstrowwl{sin}
					\sin x & -m\Sin\sqrt{1 - m^{2}}+\sqrt{1-m^{2}} & $-\pi/2<x<0, 0<m<1$\hnl
					m\Sin\sqrt{1 - m^{2}}-\sqrt{1-m^{2}} & $0<x<\pi/2, 0<m<1$\nlwl{cos}
					
					\cos x & -m\Cos\sqrt{1 - m^{2}}-\sqrt{1-m^{2}} & $-\pi/2<x<0, 0<m<1$\hnl
					m\Cos\sqrt{1 - m^{2}}-\sqrt{1-m^{2}} & $0<x<\pi/2, -1<m<0$\hnl
					m\Cos\sqrt{1 - m^{2}}+\sqrt{1-m^{2}} & $\pi/2<x<\pi, -1<m<0$\nl
					
					\tan x & -m\Tan\sqrt{m - 1}+\sqrt{m-1} & $-\pi/2<x<0, 1<m$\hnl
					m\Tan\sqrt{m - 1}-\sqrt{m-1} & $0<x<\pi/2, 1<m$\nl
					
					\cot x = \frac{1}{\tan x} & -m\Sin\frac{1}{\sqrt{-m}}+\sqrt{-m-1} & $-\pi/2<x<0, m<-1$\hnl
					m\Sin\frac{1}{\sqrt{-m}}-\sqrt{-m-1} & $0<x<\pi/2, m<-1$\nl
					
					\sec x=\frac{1}{\cos x} & m\Cos\frac{\sqrt{\sqrt{4m^{2} + 1} - 1}}{\sqrt{2}m} - \frac{\sqrt{2}m}{\sqrt{\sqrt{4m^{2} + 1} - 1}} & $0<x<\pi/2, 0<m$\hnl
					m\Cos\frac{-\sqrt{\sqrt{4m^{2} + 1} - 1}}{\sqrt{2}m} + \frac{\sqrt{2}m}{\sqrt{\sqrt{4m^{2} + 1} - 1}} & $\pi/2<x<\pi, 0<m$
					\enl
				\end{tabular}$
			\end{center}
		\end{subtable}
	\end{table}

	\begin{table}[H]
		\begin{subtable}{\textwidth}
			\begin{center}$
				\begin{tabular}{|c|L{5cm}|L{5cm}|P{4cm}|}
					\firstrow
					\csc x=\frac{1}{\sin x} & m\Sin\frac{\sqrt{\sqrt{4m^{2} + 1} - 1}}{\sqrt{2}m} - \frac{\sqrt{2}m}{\sqrt{\sqrt{4m^{2} + 1} - 1}} & $-\pi/2<x<0, m<0$\hnl
					-m\Sin\frac{\sqrt{\sqrt{4m^{2} + 1} - 1}}{\sqrt{2}m} + \frac{\sqrt{2}m}{\sqrt{\sqrt{4m^{2} + 1} - 1}} & $0<x<\pi/2, m<0$\nl
					
					\sin^{2} x & \frac{1}{2}(m\Sin m + \sqrt{1 - m^{2}} - 1) & $-\pi/4 < x < \pi/4, -1<m<1$\nl
					\cos^{2} x & -\frac{1}{2}(m\Sin m + \sqrt{1 - m^{2}} + 1) & $-\pi/4 < x < \pi/4, -1<m<1$\nl
					2\sin\sqrt{x} - 2\sqrt{x}\cos\sqrt{x} & m(\Sin m)^{2} + 2\sqrt{1 - m^{2}}\Sin m - 2m & $0 < x < \pi^{2}/4, 0 < m < 1$\nl
					
					\sin x - \frac{1}{3}\sin^{3} x & -m\Sin((1 - m^{2/3})^{1/2}) + (1 - m^{2/3})^{1/2} - \frac{1}{3}(1 - m^{2/3})^{3/2} & $-\pi/2 < x < 0, 0 < m < 1$\hnl
					m\Sin((1 - m^{2/3})^{1/2}) - (1 - m^{2/3})^{1/2} + \frac{1}{3}(1 - m^{2/3})^{3/2} & $0 < x < \pi/2, 0 < m < 1$\nl
					
					-\cos x + \frac{1}{3}\cos^{3} x & m\Cos((1 - m^{2/3})^{1/2}) + (1 - m^{2/3})^{1/2} - \frac{1}{3}(1 - m^{2/3})^{3/2} & $0 < x < \pi/2, 0 < m < 1$\hnl
					m\Cos(-(1 - m^{2/3})^{1/2}) - (1 - m^{2/3})^{1/2} + \frac{1}{3}(1 - m^{2/3})^{3/2} & $\pi/2 < x < \pi, 0 < m < 1$\enl
				\end{tabular}$
			\end{center}
		\end{subtable}
	\end{table}
					
	\begin{table}[H]
		\begin{subtable}{\textwidth}
			\begin{center}$
				\begin{tabular}{|c|L{5cm}|L{5cm}|P{4cm}|}
					\firstrow
					\tan x + \frac{1}{3}\tan^{3} x & -m\Tan((m^{1/2} - 1)^{1/2}) + (m^{1/2} - 1)^{1/2} + \frac{1}{3}(m^{1/2} - 1)^{3/2} & $-\pi/2 < x < 0, 1 < m$\hnl
					m\Tan((m^{1/2} - 1)^{1/2}) - (m^{1/2} - 1)^{1/2} - \frac{1}{3}(m^{1/2} - 1)^{3/2} & $0 < x < \pi/2, 1 < m$\nl
					\ln\sin x = -\ln\csc x & m\Sin\frac{1}{\sqrt{1+m^{2}}} + \ln\sqrt{1 + m^{2}} & $0 < x < \pi/2, 0 < m$\nl
					
					\ln\cos x = -\ln\sec x & -m\Cos\frac{1}{\sqrt{1+m^{2}}} + \ln\sqrt{1 + m^{2}} & $-\pi/2 < x < 0, 0 < m$\hnl
					m\Cos\frac{1}{\sqrt{1+m^{2}}} + \ln\sqrt{1 + m^{2}} & $0 < x < \pi/2, m < 0$\nl
					
					\ln\tan x = -\ln\cot x & m\Tan\frac{m - \sqrt{m^{2} - 4}}{2} - \ln\frac{m - \sqrt{m^{2} - 4}}{2} & $0 < x < \pi/4, 2 < m$\hnl
					m\Tan\frac{m + \sqrt{m^{2} - 4}}{2} - \ln\frac{m + \sqrt{m^{2} - 4}}{2} & $\pi/4 < x < \pi/2, 2 < m$\nl
					
					\frac{x}{2}(\sin\ln x + \cos\ln x) & \frac{1}{2}e^{\Cos m}(m - \sqrt{1 - m^{2}}) & $1 < x < e^{\pi}, -1 < m < 1$\nl
					\frac{x}{2}(\sin\ln x - \cos\ln x) & \frac{1}{2}e^{\Sin m}(m + \sqrt{1 - m^{2}}) & $e^{-\pi/2} < x < e^{\pi/2}, -1 < m < 1$\nl
					
					\Sin x & -\sqrt{m^{2} - 1} + \Sin\frac{\sqrt{m^{2} - 1}}{m} & $-1 < x < 0, 1 < m$\hnl
					\sqrt{m^{2} - 1} - \Sin\frac{\sqrt{m^{2} - 1}}{m} & $0 < x < 1, 1 < m$
					\enl
				\end{tabular}$
			\end{center}
		\end{subtable}
	\end{table}

	\begin{table}[H]
		\begin{subtable}{\textwidth}
			\begin{center}$
				\begin{tabular}{|c|L{5cm}|L{5cm}|P{4cm}|}
					\firstrow
					\Cos x & \sqrt{m^{2} - 1} - \Cos\frac{\sqrt{m^{2} - 1}}{m} & $-1 < x < 0, m < -1$\hnl
					-\sqrt{m^{2} - 1} - \Cos\frac{-\sqrt{m^{2} - 1}}{m} & $0 < x < 1, m < -1$\nl
					
					\Tan x & \Tan\sqrt{\frac{1 - m}{m}} - \sqrt{m(1-m)} & $ x < 0, 0 < m < 1$\hnl
					-\Tan\sqrt{\frac{1 - m}{m}} + \sqrt{m(1-m)} & $0 < x, 0 < m < 1$\nl
					
					\Sin\sqrt{x} & \frac{m - \sqrt{m^{2} - 1}}{2} - \Sin\sqrt{\frac{m - \sqrt{m^{2} - 1}}{2m}} & $0 < x < 1/2, 1 < m$\hnl
					\frac{m + \sqrt{m^{2} - 1}}{2} - \Sin\sqrt{\frac{m + \sqrt{m^{2} - 1}}{2m}} & $1/2 < x < 1, 1 < m$\nl
					
					\Sin x - \sqrt{1 - x^{2}} & m - \Sin\frac{m^{2} - 1}{m^{2} + 1} & $-1 < x < 1, 0 < m$\nl
					
					\Sin\tanh x = \Tan\sinh x & -m\ln\frac{1 + \sqrt{1 - m^{2}}}{m} + \Sin\sqrt{1 - m^{2}} & $x < 0, 0 < m < 1$\hnl
					m\ln\frac{1 + \sqrt{1 - m^{2}}}{m} - \Sin\sqrt{1 - m^{2}} & $0 < x, 0 < m < 1$
					\enl
				\end{tabular}$
			\end{center}
		\end{subtable}
	\end{table}

	\begin{table}[H]
		\begin{subtable}{\textwidth}
			\subtablelabel{Special Functions}\label{spe}
			\begin{center}$
				\begin{tabular}{|c|L{5cm}|L{5cm}|P{4cm}|}
					\firstrow
					\erf(x) = \frac{2}{\sqrt{\pi}}\int_{0}^{x} e^{-t^{2}}dt & -m\sqrt{-\ln(\sqrt{\pi}m/2)} - \erf\left(-\sqrt{-\ln (\sqrt{\pi}m/2)}\right) & $x<0, 0<m<2/\sqrt{\pi}$, \newline $\erf(x)$ is the \textit{error function}\hnl
					m\sqrt{-\ln(\sqrt{\pi}m/2)} - \erf\left(\sqrt{-\ln (\sqrt{\pi}m/2)}\right) & $0<x, 0 < m < 2/\sqrt{\pi}$\nl
					
					\erfc(x) = 1 - \erf(x) = \frac{2}{\sqrt{\pi}}\int_{x}^{\infty} e^{-t^{2}}dt & -m\sqrt{-\ln(-\sqrt{\pi}m/2)} - \erfc\left(-\sqrt{-\ln (-\sqrt{\pi}m/2)}\right) & $x<0, -2/\sqrt{\pi}<m<0$, \newline $\erfc(x)$ is the \textit{complementary error function}\hnl
					m\sqrt{-\ln(-\sqrt{\pi}m/2)} - \erfc\left(\sqrt{-\ln (-\sqrt{\pi}m/2)}\right) & $0<x, -2/\sqrt{\pi}<m<0$\nl
					
					\erf^{-1}(x) & \sqrt{\ln(2m/\sqrt{\pi})} + m\,\erf\left(-\sqrt{\ln (2m/\sqrt{\pi})}\right) & $-1<x<0,\sqrt{\pi}/2<m$\hnl
					-\sqrt{\ln(2m/\sqrt{\pi})} + m\,\erf\left(\sqrt{\ln (2m/\sqrt{\pi})}\right) & $0<x<1,\sqrt{\pi}/2<m$\nl
					
					\erfc^{-1}(x) & -\sqrt{\ln(-2m/\sqrt{\pi})} + m\,\erfc\left(\sqrt{\ln (-2m/\sqrt{\pi})}\right) & $0<x<1,m<-\sqrt{\pi}/2$\hnl
					\sqrt{\ln(-2m/\sqrt{\pi})} + m\,\erfc\left(-\sqrt{\ln (-2m/\sqrt{\pi})}\right) & $1<x<2,m<-\sqrt{\pi}/2$
					\enl
				\end{tabular}$
			\end{center}
		\end{subtable}
	\end{table}
	
	\begin{table}[H]
		\begin{subtable}{\textwidth}
			\begin{center}$
				\begin{tabular}{|c|L{5cm}|L{5cm}|P{4cm}|}
					\firstrow
					\Phi(x) = \frac{1}{\sqrt{2\pi}}\int_{-\infty}^{x} e^{-t^{2}/2}dt & -m\sqrt{-\ln(2\pi m^{2})} - \Phi(-\sqrt{-\ln(2\pi m^{2})}) & $x < 0, 0 < m < 1/\sqrt{2\pi}$, \newline $\Phi(x)$ is the \textit{probability integral} or \textit{standard normal distribution function}\hnl
					m\sqrt{-\ln(2\pi m^{2})} - \Phi(\sqrt{-\ln(2\pi m^{2})}) & $0 < x, 0 < m < 1/\sqrt{2\pi}$\nl
					
					\frac{1}{\sqrt{2\pi}}\int_{x}^{\infty} e^{-t^{2}/2}dt & -m\sqrt{-\ln(2\pi m^{2})} - 1 + \Phi(-\sqrt{-\ln(2\pi m^{2})}) & $x < 0, -1/\sqrt{2\pi} < m < 0$, \newline $f(x)$ is the \textit{Q function}, which is $\Pr(X > x)$ for the standard normal distribution\hnl
					m\sqrt{-\ln(2\pi m^{2})} - 1 + \Phi(\sqrt{-\ln(2\pi m^{2})}) & $0 < x, -1/\sqrt{2\pi} < m < 0$\nl
					
					\Phi^{-1}(x) & \sqrt{\ln \frac{m^{2}}{2\pi}} + m\Phi\left(-\sqrt{\ln \frac{m^{2}}{2\pi}}\right) & $0 < x < 1/2, \sqrt{2\pi} < m$, \newline $\Phi^{-1}$ is the \textit{probit function} or the \textit{quantile function of the standard normal distribution}\hnl
					-\sqrt{\ln \frac{m^{2}}{2\pi}} + m\Phi\left(\sqrt{\ln \frac{m^{2}}{2\pi}}\right) & $1/2 < x < 1, \sqrt{2\pi} < m$\nl
				
					\frac{1}{\pi}\int_{-\infty}^{x} \frac{1}{1 + t^{2}}dt = \frac{1}{2} + \frac{1}{\pi}\Tan x & -m\sqrt{\frac{1 - \pi m}{\pi m}} + \frac{1}{\pi}\Tan\sqrt{\frac{1 - \pi m}{\pi m}} - \frac{1}{2} & $x<0, 0<m<1/\pi, f(x)$ is the \textit{standard Cauchy distribution} and \textit{Student's $t$-distribution} with one degree of freedom\hnl
					m\sqrt{\frac{1 - \pi m}{\pi m}} - \frac{1}{\pi}\Tan\sqrt{\frac{1 - \pi m}{\pi m}} - \frac{1}{2} & $0<x, 0<m<1/\pi$
					\enl				
				\end{tabular}$
			\end{center}
		\end{subtable}
	\end{table}

	\begin{table}[H]
		\begin{subtable}{\textwidth}
			\begin{center}$
				\begin{tabular}{|c|L{5cm}|L{5cm}|P{4cm}|}
					\firstrow
					
					\frac{\sqrt{2}}{4}\int_{-\infty}^{x}\left(1 + \frac{t^{2}}{2}\right)^{-3/2}dt
					= \frac{1}{2}+\frac{\sqrt{2}x}{4}\left(1 + x^{2}\right)^{-1/2} & \frac{1}{2}((1 - 2m^{2/3})^{3/2} - 1) & $x < 0, 0 < m < \sqrt{2}/4,$\newline $f(x)$ is \textit{Student's $t$-distribution} with two degrees of freedom\hnl
					-\frac{1}{2}((1 - 2m^{2/3})^{3/2} + 1) & $0 < x, 0 < m < \sqrt{2}/4$\nl
					
					\mathrm{S}(x) = \frac{1}{b}\int_{-\infty}^{x} \left(1 + \frac{t^{2}}{\nu}\right)^{-\frac{\nu + 1}{2}}dt & -m\sqrt{\nu\left(\frac{1}{(mb)^{\frac{2}{\nu+1}}} - 1\right)} - \mathrm{S}\left(-\sqrt{\nu\left(\frac{1}{(mb)^{\frac{2}{\nu+1}}} - 1\right)}\right) & $x < 0, 0 < m < \frac{1}{b},$\newline where $b = \sqrt{\nu}\mathrm{B}(1/2, \nu/2)$, $\mathrm{B}(x, y)$ is the beta function, $\mathrm{S}(x)$ is \textit{Student's $t$-distribution} with $\nu>0$ degrees of freedom\hnl
					m\sqrt{\nu\left(\frac{1}{(mb)^{\frac{2}{\nu+1}}} - 1\right)} - \mathrm{S}\left(\sqrt{\nu\left(\frac{1}{(mb)^{\frac{2}{\nu+1}}} - 1\right)}\right) & $0<x, 0 < m < \frac{1}{b},$\nl
					
					\mathrm{Ei}(x) = \int_{-\infty}^{x} \frac{e^{t}}{t}dt & -mW_{0}\left(-\frac{1}{m}\right) - \mathrm{Ei}\left(-W_{0}\left(-\frac{1}{m}\right)\right) & $x < 0, m < 0$ and $0 < x < 1, e < m$, $\mathrm{Ei}(x)$ is the exponential integral\hnl
					-mW_{-1}\left(-\frac{1}{m}\right) - \mathrm{Ei}\left(-W_{-1}\left(-\frac{1}{m}\right)\right) & $1 < x, e < m$\nl
					
					\mathrm{li}(x) = \int_{0}^{x}\frac{dt}{\ln t} & me^{1/m} - \mathrm{li}(e^{1/m}) \newline = me^{1/m} - \mathrm{Ei}\left(\frac{1}{m}\right) & $0 < x < 1, m < 0$ and $1 < x, 0 < m$, $\mathrm{li}(x)$ is the \textit{logarithmic integral}\nl
					
					\mathrm{Li}(x) = \mathrm{li}(x) - \mathrm{li}(2) = \int_{2}^{x}\frac{dt}{\ln t} & me^{1/m} - \mathrm{Li}(e^{1/m}) \newline = me^{1/m} - \mathrm{Ei}\left(\frac{1}{m}\right) + \mathrm{Ei}(\ln 2) 
					\newline = me^{1/m} - \mathrm{li}(e^{1/m}) + \mathrm{li}(2) & $1 < x, 0 < m$, $\mathrm{Li}(x)$ is the \textit{offset} or \textit{Eulerian logarithmic integral}
					
					\enl
				\end{tabular}$
			\end{center}
		\end{subtable}
	\end{table}

	\begin{table}[H]
	\begin{subtable}{\textwidth}
		\begin{center}$
			\begin{tabular}{|c|L{5cm}|L{5cm}|P{4cm}|}
				\firstrow
				F(x;k) = \int_{0}^{x}\frac{1}{\sqrt{1 - k^{2}\sin^{2}t}}dt & m\Sin\frac{\sqrt{m^{2} - 1}}{mk} - F\left(\Sin\frac{\sqrt{m^{2} - 1}}{mk}; k\right) & $0 < x < \pi/2, 1 < m < 1/\sqrt{1 - k^{2}}, 0 < k < 1$, $F(x;k)$ is the \textit{incomplete elliptic integral of the first kind}, $k$ is the \textit{elliptic modulus}\nl
				
				E(x;k) = \int_{0}^{x}\sqrt{1 - k^{2}\sin^{2}t}dt & m\Sin\frac{\sqrt{1 - m^{2}}}{k} - E\left(\Sin\frac{\sqrt{1 - m^{2}}}{k}; k\right) & $0 < x < \pi/2, \sqrt{1 - k^{2}} < m < 1, 0 < k < 1$, $E(x;k)$ is the \textit{incomplete elliptic integral of the second kind}, $k$ is the \textit{elliptic modulus}\nl
				
				J_{0}(x) & -mJ_{1}^{-1}(m) - J_{0}(J_{1}^{-1}(m)) & $|x| < 1.841\dots,$ $|m| < 0.582\dots,$ $J_{0}(x)$ and $J_{1}(x)$ are the \textit{Bessel functions of the first kind} of order $0$ and $1$\nl
				
				 \frac{x^{2}}{2}\left(\ln x - \frac{1}{2}\right) & \frac{m^{2}}{4}\frac{2W_{-1}(m) + 1}{W_{-1}^{2}(m)} & $0 < x < 1/e, -1/e < m < 0$\hnl
				 \frac{m^{2}}{4}\frac{2W_{0}(m) + 1}{W_{0}^{2}(m)} & $1/e < x, -1/e < m, m\neq 0, g(0) = 1/4$\nl
				 
				 x\ln x - x + \frac{x^{2}}{2} & \frac{W_{0}^{2}(e^{m})}{2} + W_{0}(e^{m}) & $0 < x$\nl
				 
				 e^{x} + \frac{x^{2}}{2} & \frac{1}{2}(m^{2} - W_{0}^{2}(e^{m}) - 2W_{0}(e^{m})) & \nl
				 
				 (x-1)e^{x} & m\left(W_{-1}(m) - 1 + \frac{1}{W_{-1}(m)}\right) = \int W_{-1}(m)dm & $x < -1, -1/e < m < 0$\hnl
				 m\left(W_{-0}(m) - 1 + \frac{1}{W_{0}(m)}\right) = \int W_{0}(m)dm & $-1 < x, -1/e < m, m\neq 0$
				 
				 \enl
				 
			\end{tabular}$
		\end{center}
	\end{subtable}
	\end{table}
	\begin{table}[H]
 	\begin{subtable}{\textwidth}
 		\begin{center}$
 			\begin{tabular}{|c|L{5cm}|L{5cm}|P{4cm}|}
 				\firstrow
 				
 				(x+a)e^{x} & m\left(W_{-1}(me^{a+1}) - a - 2\right) + e^{W_{-1}(me^{a+1}) - a - 1} & $x < -a-2, -1/e^{a+2} < m < 0$\hnl
 				m\left(W_{0}(me^{a+1}) - a - 2\right) + e^{W_{0}(me^{a+1}) - a - 1} & $-a-2 < x, -1/e^{a+2} < m$\nl
		 				
				\gamma(r, x) = \int_{0}^{x} t^{r-1}e^{-t}dt & (1-r)mW_{0}\left(\frac{m^{\frac{1}{r-1}}}{1-r}\right) - \gamma\left(r, (1-r)W_{0}\left(\frac{m^{\frac{1}{r-1}}}{1-r}\right)\right) & $0 < x < r-1,\newline 0 < m < (\frac{r-1}{e})^{r-1}, 1 < r$, $\gamma(r, x)$ is the \textit{lower incomplete gamma function}
				\hnl
				(1-r)mW_{-1}\left(\frac{m^{\frac{1}{r-1}}}{1-r}\right) - \gamma\left(r, (1-r)W_{-1}\left(\frac{m^{\frac{1}{r-1}}}{1-r}\right)\right) & $r - 1 < x,\newline 0 < m < (\frac{r-1}{e})^{r-1}, 1 < r$\nl
				 
				\Gamma(r, x) = \int_{x}^{\infty} t^{r-1}e^{-t}dt & -\L\{\gamma(r, x)\}(-m) - \Gamma(r) & $0 < x, 1 < r$, $\Gamma(r, x)$ is the \textit{upper incomplete gamma function}, $\Gamma(r)$ is the \textit{gamma function}
				\enl
			\end{tabular}$
		\end{center}
	\end{subtable}
\end{table}

\end{document}